\newcounter{num}[section]
\newenvironment{theorem}
{\refstepcounter{num}%
\bigskip\noindent\nopagebreak[4]{\bf Theorem~\arabic{section}.\arabic{num}. }\it}
\newenvironment{proposition}
{\refstepcounter{num}%
\bigskip\noindent\nopagebreak[4]{\bf Proposition~\arabic{section}.\arabic{num}. }\it}
\newenvironment{lemma}
{\refstepcounter{num}%
\bigskip\noindent\nopagebreak[4]{\bf Lemma~\arabic{section}.\arabic{num}. }\it}
\newenvironment{remark}
{\refstepcounter{num}%
\bigskip\noindent\nopagebreak[4]{\bf Remark~\arabic{section}.\arabic{num}. }}
\newenvironment{example}
{\refstepcounter{num}%
\bigskip\noindent\nopagebreak[4]{\bf Example~\arabic{section}.\arabic{num}. }}
\newcommand{\Ss}{{\mathbf{S}}}
\newcommand{\V}{{\mathrm{V}}}
\newcommand{\Var}{{\mathrm{Var}}}
\newcommand{\Irr}{{\mathrm{Irr}}}
\newcommand{\lb}{{\langle}}
\newcommand{\rb}{{\rangle}}
\newcommand{\pr}{{\prime}}
\newcommand{\s}{{\sigma}}
\newcommand{\al}{{\alpha}}
\begin{document}
\title{On irreducible algebraic sets over linearly ordered semilattices II}
\author{Artem N. Shevlyakov}

\maketitle

\abstract{Equations over linearly ordered semilattices are studied. For any equation $t(X)=s(X)$ we find irreducible components of its solution set and compute the average number of irreducible components of all equations in $n$ variables.}
\section{Introduction}

This paper is devoted to the following problem. One can define a notion of an equation over a linearly ordered semilattice $L_l=\{a_1,a_2,\ldots,a_l\}$ (the formal definition of an equation is given below in the paper). A set $Y$ is {\it algebraic} if it is the solution set of some system of equations over $L_l$.  Let us consider an equation $t(X)=s(X)$ in $n$ variables over $L_l$, and $Y$ be the solution set of $t(X)=s(X)$. One can find algebraic sets $Y_1,Y_2,\ldots,Y_m$ such that $Y=\bigcup_{i=1}^m Y_i$. One can decompose each $Y_i$ into a union of other algebraic sets, etc. This process terminates after a finite number of steps and gives a decomposition of $Y$ into a union of {\it irreducible} algebraic sets $Y_i$ (the sets $Y_i$ are called the {\it irreducible components }of $Y$). Roughly speaking, irreducible algebraic sets are ``atoms'' which form any algebraic set. The size and the number of such ``atoms'' are important characteristics of the semilattices $L_l$, since there are connections between irreducible algebraic sets and universal theory of linearly ordered semilattices (see \cite{uni_Th_II}). Moreover, the number of irreducible components was involved in the estimation of lower bounds of algorithm complexity (see~\cite{ben-or} for more details).

In this paper we assume $n\leq l$ (i.e. the order of the semilattice $L_l$ is not less than the number of variables in $t(X)=s(X)$) and study (Section~\ref{sec:decomposition_properties}) the properties of algebraic sets over $L_l$. Precisely, for any equation $t(X)=s(X)$ in $n$ variables we count the number of irreducible components (see~(\ref{eq:Irr(k_1,k_2,n,l)})), and in Section~\ref{sec:average} we count the average number $\overline{\Irr}(n)$ of irreducible components of the solution sets of equations in $n$ variables.  

Remark that the current paper is the sequel of~\cite{shevl_LOS_irred}, where we solved the similar problems assuming $n>l$ (we discuss this case in Remark~\ref{rem:rem} below).

\section{Main definitions}

Let $L_l=\{a_1,a_2,\ldots,a_l\}$ be the linearly ordered semilattice of $l$ elements and $a_1<a_2<\ldots <a_l$. The multiplication in $L_l$ is defined by  $a_i\cdot a_j=a_{\min(i,j)}$. Obviously, the linear order on $L_l$ can be expressed by the multiplication as follows
\[
a_i\leq a_j\Leftrightarrow a_ia_j=a_i.
\]
A {\it term} $t(X)$ in variables $X=\{x_1,x_2,\ldots,x_n\}$ is a commutative word in letters $x_i$.

Let $\Var(t)$ be the set of all variables occurring in a term $t(X)$.
Following~\cite{uni_Th_II}, an {\it equation} is an equality of terms $t(X)=s(X)$. Below we consider inequalities $t(X)\leq s(X)$ as equations, since $t(X)\leq s(X)$ is the short form of $t(X)s(X)=t(X)$. Notice that we consider equations as {\it ordered pairs} of terms, i.e. the expressions $t(X)=s(X)$, $s(X)=t(X)$ are {\it different} equations. Let $Eq(n)$ denote the set of all equations in $X=\{x_1,x_2,\ldots,x_n\}$ variables (we assume that each $t(X)=s(X)\in Eq(n)$ contains the occurrences of all variables $x_1,x_2,\ldots,x_n$). An equation $t(X)=s(X)\in Eq(n)$ is said to be a {\it $(k_1,k_2)$-equation} if $|\Var(t)\setminus\Var(s)|=k_1$ and $|\Var(s)\setminus\Var(t)|=k_2$. For example, $x_1x_2=x_1x_3x_4$ is a $(1,2)$-equation. Let $Eq(k_1,k_2,n)\subseteq Eq(n)$ be the set of all $(k_1,k_2)$-equations in $n$ variables. Obviously,
\begin{equation}
Eq(n)=\bigcup_{(k_1,k_2)\in K_n}Eq(k_1,k_2,n),
\label{eq:Eq(n)}
\end{equation}
where 
\[
K_n=\{(k_1,k_2)\mid k_1+k_2\leq n\}\setminus\{(0,n),(n,0)\}.
\]

Each equation $t(X)=s(X)\in Eq(k_1,k_2,n)$ is uniquely defined by $k_1$ variables in the left part and by $k_2$ other variables in the right part (the residuary $n-k_1-k_2$ variables should occur in both parts of the equation). Thus, 
\[
\#Eq(k_1,k_2,n)=\binom{n}{k_1}\binom{n-k_1}{k_2}.
\]
By~(\ref{eq:Eq(n)}), one can compute that
\[
\#Eq(n)=3^n-2.
\]  

\begin{remark}
\label{rem:rem}
In this paper we consider only equations $t(X)=s(X)$ with $n\leq l$, i.e. the number of variables occurring in $t(X)=s(X)$ is not more than the order of the semilattice $L_l$. The case $n>l$ needs a completely different technic and was considered in~\cite{shevl_LOS_irred}. All main results of the current paper do not hold for the case $n>l$. 
\end{remark}

\bigskip

A point $P\in L_l^n$ is a {\it solution} of an equation $t(X)=s(X)$ if $t(P),s(P)$ define the same element in the semilattice $L_l$. By the properties of linearly ordered semilattices, a point $P=(p_1,p_2,\ldots,p_n)$ is a solution of $t(X)=s(X)$ iff there exist variables $x_i\in\Var(t)$, $x_j\in\Var(s)$ such that $p_i=p_j$ and $p_i\leq p_k$ for all $1\leq k\leq n$.  The set of all solutions of an equation $t(X)=s(X)$ is denoted by $\V(t(X)=s(X))$.

An arbitrary set of equations is called a {\it system}. The set of all solutions $\V(\Ss)$ of a system $\Ss=\{t_i(X)=s_i(X)\mid i\in I\}$ is defined as $\bigcap_{i\in I}\V(t_i(X)=s_i(X))$. A set $Y\subseteq L_l^n$ is called {\it algebraic over }$L_l$ if there exists a system $\Ss$ in $n$ variables with $\V(\Ss)=Y$. An algebraic set $Y$ is {\it irreducible} if $Y$ is not a proper finite union of other algebraic sets.  

\begin{proposition}\textup{(\cite{shevl_LOS_irred}, Proposition~2.2)}
Any algebraic set $Y$ over $L_l$ is a finite union of irreducible sets
\begin{equation}
Y=Y_1\cup Y_2\cup \ldots\cup Y_m,\quad Y_i\nsubseteq Y_j \mbox{ for all $i\neq j$},
\label{eq:union_Y_general}
\end{equation}
and this decomposition is unique up to a permutation of components.
\end{proposition}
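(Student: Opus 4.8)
The plan is to mimic the classical proof that closed sets in a Noetherian topological space admit a unique irredundant decomposition into irreducible closed sets, exploiting two features special to this setting. First, $L_l^n$ is a \emph{finite} set, so every algebraic set has finitely many points and the descending chain condition holds trivially; this replaces the usual Noetherian hypothesis and makes the existence part a routine induction on cardinality. Second, I would record at the outset the elementary but essential closure fact that a finite intersection of algebraic sets is again algebraic: if $Y=\V(\Ss_1)$ and $Z=\V(\Ss_2)$ then $Y\cap Z=\V(\Ss_1\cup\Ss_2)$. This is what forces an irreducible algebraic set to behave like a ``prime'' and drives the uniqueness argument.

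For existence, I would argue by well-founded induction on $|Y|$. If $Y$ is already irreducible, then $Y=Y$ is the required decomposition. Otherwise, by definition $Y$ is a proper finite union $Y=Z_1\cup\dots\cup Z_k$ of algebraic sets with each $Z_j\subsetneq Y$, so $|Z_j|<|Y|$; by the induction hypothesis each $Z_j$ is a finite union of irreducible algebraic sets, and collecting these expresses $Y$ as such a union. Finally I would discard every component that is contained in another, which leaves an irredundant union satisfying $Y_i\nsubseteq Y_j$ for $i\neq j$. The minimal nonempty algebraic sets (in particular singletons, should they be algebraic) serve as the base of the induction, since a set with no proper nonempty algebraic subset cannot be a proper union.

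For uniqueness, suppose $Y=Y_1\cup\dots\cup Y_m=Y_1'\cup\dots\cup Y_{m'}'$ are two irredundant decompositions into irreducible algebraic sets. Fixing $i$, I would write $Y_i=\bigcup_{j}(Y_i\cap Y_j')$; each $Y_i\cap Y_j'$ is algebraic by the closure fact above, so irreducibility of $Y_i$ forces $Y_i=Y_i\cap Y_j'$ for some $j$, i.e.\ $Y_i\subseteq Y_j'$. Applying the same reasoning to $Y_j'$ yields $Y_j'\subseteq Y_k$ for some $k$, whence $Y_i\subseteq Y_k$; irredundancy then gives $i=k$ and therefore $Y_i=Y_j'$. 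This sets up a bijection between the two families of components, proving uniqueness up to a permutation.

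The only genuine subtlety --- the ``hard part'' --- is the step in uniqueness that converts $Y_i=\bigcup_j (Y_i\cap Y_j')$ into $Y_i\subseteq Y_j'$ for a single $j$: this is exactly where irreducibility is used, and it relies on having already verified that each intersection $Y_i\cap Y_j'$ is algebraic. Everything else is bookkeeping made painless by the finiteness of $L_l^n$, so I would make sure the intersection-closure property is stated and proved cleanly before embarking on the main induction.
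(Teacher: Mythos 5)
The paper does not prove this proposition at all: it is imported verbatim from the earlier paper \cite{shevl_LOS_irred} (Proposition~2.2), so there is no in-text argument to compare yours against. Taken on its own, your proof is correct and is the standard one: the closure of algebraic sets under finite intersection ($\V(\Ss_1)\cap\V(\Ss_2)=\V(\Ss_1\cup\Ss_2)$) is exactly the right preliminary fact, the finiteness of $L_l^n$ legitimately replaces the Noetherian descending chain condition in the existence induction, and the uniqueness step correctly uses the definition of irreducibility (``not a proper finite union of other algebraic sets'') to pass from $Y_i=\bigcup_j(Y_i\cap Y_j')$ to $Y_i\subseteq Y_j'$ for a single $j$, after which irredundancy pins down the bijection. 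The only cosmetic point worth tidying is the degenerate case $Y=\emptyset$ (which is vacuously irreducible under the paper's definition, or can be excluded outright); this does not affect the validity of the argument.
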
 

\bigskip

The subsets $Y_i$ from the union~(\ref{eq:union_Y_general}) are called the {\it irreducible components} of $Y$.

Let $Y$ be an algebraic set over $L_l$ defined by a system $\Ss(X)$. One can define an equivalence relation $\sim_Y$ over the set of all terms in variables $X$ as follows
\[
t(X)\sim_Y s(X)\Leftrightarrow t(P)=s(P) \mbox{ for any point $P\in Y$}.
\] 
The set of all $\sim_Y$-equivalence classes is called {\it the coordinate semilattice of $Y$} and denoted by $\Gamma(Y)$ (see~\cite{uni_Th_II} for more details). The following statement describes the coordinate semilattices of irreducible algebraic sets.

\begin{proposition}\textup{(\cite{shevl_LOS_irred}, Proposition~2.3)}
A set $Y$ is irreducible over $L_l$ iff $\Gamma(Y)$ is embedded into $L_l$ 
\label{pr:gamma_is_embedded_for_irr}
\end{proposition}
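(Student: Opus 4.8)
The plan is to recast irreducibility as a statement about homomorphisms out of the coordinate semilattice, and then to lean on the fact that $\Gamma(Y)$ is \emph{finite}. First I would record two preliminaries. Every term in $x_1,\dots,x_n$ over a semilattice is determined by the nonempty set of variables it contains, so the term semilattice has only $2^n-1$ elements and hence $\Gamma(Y)$ is finite. Next I would establish a Galois-type correspondence: since $Y$ is algebraic it coincides with the set of all points respecting $\sim_Y$ (any $P$ satisfying every equation $t\sim_Y s$ satisfies in particular the system defining $Y$, hence lies in $Y$). Consequently the assignment $P=(p_1,\dots,p_n)\mapsto\mathrm{ev}_P$, where $\mathrm{ev}_P\colon\Gamma(Y)\to L_l$ sends $[t]$ to $t(P)$, is a bijection between $Y$ and the set of semilattice homomorphisms $\Gamma(Y)\to L_l$. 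In this language, ``$\Gamma(Y)$ embeds into $L_l$'' means exactly ``some point $P\in Y$ makes $\mathrm{ev}_P$ injective,'' because an embedding is an injective homomorphism and every homomorphism is an $\mathrm{ev}_P$.

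For the implication ``embeds $\Rightarrow$ irreducible'' I would fix $P\in Y$ with $\mathrm{ev}_P$ injective and take any representation $Y=Y_1\cup\dots\cup Y_m$ with each $Y_i$ algebraic. The point $P$ lies in some $Y_i$, and I claim $\sim_{Y_i}=\sim_Y$: the inclusion $Y_i\subseteq Y$ gives $\sim_Y\subseteq\sim_{Y_i}$, while $t\sim_{Y_i}s$ forces $t(P)=s(P)$ (as $P\in Y_i$), whence injectivity of $\mathrm{ev}_P$ yields $[t]=[s]$ in $\Gamma(Y)$, that is $t\sim_Y s$. Since an algebraic set is recovered from its congruence, $\sim_{Y_i}=\sim_Y$ gives $Y_i=Y$; so no decomposition of $Y$ is proper, and $Y$ is irreducible.

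For the converse I would argue contrapositively: assume no point of $Y$ yields an injective evaluation and manufacture a proper finite decomposition. For each pair of distinct classes $[t]\neq[s]$ of $\Gamma(Y)$ set $Y_{[t],[s]}=\{P\in Y\mid t(P)=s(P)\}$. This is independent of the chosen representatives, it is algebraic as the intersection $Y\cap\V(t(X)=s(X))$, and it is a \emph{proper} subset of $Y$ since $t\not\sim_Y s$. By hypothesis every $P\in Y$ collapses some pair of distinct classes under $\mathrm{ev}_P$, so $P$ belongs to one of these sets; hence $Y=\bigcup_{[t]\neq[s]}Y_{[t],[s]}$. This union is \emph{finite} precisely because $\Gamma(Y)$ has finitely many elements, so it exhibits $Y$ as a proper finite union of algebraic sets, contradicting irreducibility. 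Thus an irreducible $Y$ admits a point with injective evaluation, i.e.\ $\Gamma(Y)$ embeds into $L_l$.

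The heart of the argument, and the step I expect to require the most care, is the interplay between finiteness of $\Gamma(Y)$ and the point/homomorphism correspondence: finiteness is exactly what converts the a priori infinite family of ``collapsing'' conditions into a single finite union, while the correspondence is what lets injectivity of one concrete evaluation stand in for the abstract existence of an embedding. This is the concrete face, for finite coordinate semilattices, of the general principle that irreducible coordinate structures are those discriminated by the base algebra; over the chain $L_l$, discrimination of a finite semilattice collapses to embeddability. I would also dispose of the trivial edge case by noting that algebraic sets over $L_l$ are never empty, since the constant tuple $(a_1,\dots,a_1)$ solves every equation.
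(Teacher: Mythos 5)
Your argument is correct. Note that the paper you are working from does not actually prove this proposition --- it is imported verbatim from the prequel \cite{shevl_LOS_irred} (Proposition~2.3) and used as a black box, so there is no in-paper proof to compare against. What you have written is a sound, self-contained derivation along the standard lines of universal algebraic geometry: the bijection $P\mapsto \mathrm{ev}_P$ between a (nonempty) algebraic set and $\Hom(\Gamma(Y),L_l)$, injectivity of a single evaluation forcing any finite cover to contain $Y$ itself, and, in the other direction, the finite family of ``collapsing'' sets $Y\cap\V(t(X)=s(X))$ over distinct classes $[t]\neq[s]$ giving a proper finite decomposition when no evaluation is injective. You correctly identify and dispose of the two points where this could break: finiteness of $\Gamma(Y)$ (at most $2^n-1$ classes, which makes the cover finite) and nonemptiness of algebraic sets (the constant tuple $(a_1,\ldots,a_1)$, which makes the point/homomorphism correspondence and the recovery of $Y_i$ from $\sim_{Y_i}$ legitimate). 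The only cosmetic remark is that the one-element-$\Gamma(Y)$ case should be mentioned as vacuous in the contrapositive direction, since there the indexing family of pairs is empty --- but as you implicitly note, that case never satisfies the hypothesis ``no evaluation is injective.''
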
  

\bigskip

There are different algebraic sets over $L_l$ with isomorphic coordinate semilattices. Such sets are called {\it isomorphic}. For example, the following sets
\[
Y_1=\V(\{x_1\leq x_2\leq x_3\}),\; Y_2=\V(\{x_3\leq x_2\leq x_1\})
\]
has the isomorphic coordinate semilattices
\[
\Gamma(Y_1)=\lb x_1,x_2,x_3\mid x_1\leq x_2\leq x_3\rb\cong L_3,
\]
\[
\Gamma(Y_2)=\lb x_1,x_2,x_3\mid x_3\leq x_2\leq x_1\rb\cong L_3.
\]
Thus, $Y_1,Y_2$ are isomorphic.

\newpage
\section{Example}

Let $n=3$, $l=3$. We have exactly $Eq(3)=3^3-2=25$ equations in three variables over $L_3$. The following table contains the  information about such equations over $L_3$. The second column contains systems which define irreducible components of the solution set of an equation in the first column. A cell of the table contains $\uparrow$ if an information in this cell is similar to the cell above.

\bigskip

Table 1.

\bigskip

\begin{tabular}{|c|c|c|}
\hline
Equations&Irreducible components (IC)&Number of IC\\
\hline
$x_1x_2x_3=x_1x_2x_3$&$x_1\leq x_2\leq x_3 \cup x_1\leq x_3\leq x_2\cup$ &$6$\\
&$x_2\leq x_1\leq x_3 \cup x_2\leq x_3\leq x_2\cup$&\\
&$x_3\leq x_1\leq x_2 \cup x_3\leq x_2\leq x_1$&\\
\hline
$x_1=x_1x_2x_3$,&$x_1\leq x_2\leq x_3\cup x_1\leq x_3\leq x_1$&$2$\\
$x_1x_2x_3=x_1$&&\\
\hline
$x_2=x_1x_2x_3$,&$\uparrow$&$2$\\
$x_1x_2x_3=x_2$&&\\
\hline
$x_3=x_1x_2x_3$,&$\uparrow$&$2$\\
$x_1x_2x_3=x_3$&&\\
\hline
$x_1=x_2x_3$,&$x_1=x_2\leq x_3\cup x_1=x_3\leq x_2$&$2$\\
$x_2x_3=x_1$&&\\
\hline
$x_2=x_1x_3$,&$\uparrow$&$2$\\
$x_1x_3=x_2$&&\\
\hline
$x_3=x_1x_2$,&$\uparrow$&$2$\\
$x_1x_2=x_3$&&\\
\hline
$x_1x_2=x_1x_3$,&$x_1\leq x_2\leq x_3\cup x_1\leq x_3\leq x_2\cup$&$3$\\
$x_1x_3=x_1x_2$&$x_2=x_3\leq x_1$&\\
\hline
$x_1x_2=x_2x_3$,&$\uparrow$&$3$\\
$x_2x_3=x_1x_2$&&\\
\hline
$x_1x_3=x_2x_3$,&$\uparrow$&$3$\\
$x_2x_3=x_1x_3$&&\\
\hline
$x_1x_2=x_1x_2x_3$,&$x_1\leq x_2\leq x_3\cup x_1\leq x_3\leq x_2\cup$&$4$\\
$x_1x_2x_3=x_1x_2$&$x_2\leq x_1\leq x_3\cup x_2\leq x_3\leq x_1$&\\
\hline
$x_1x_3=x_1x_2x_3$,&$\uparrow$&$4$\\
$x_1x_2x_3=x_1x_3$&&\\
\hline
$x_2x_3=x_1x_2x_3$,&$\uparrow$&$4$\\
$x_1x_2x_3=x_2x_3$&&\\
\hline
\end{tabular} 

\bigskip 

Notice that $\V(x_1= x_2\leq x_3)$ does not define an irreducible component for $Y=\V(x_1x_2=x_1x_3)$, since $\V(x_1= x_2\leq x_3)$ is included into the solution set of another irreducible component $\V(x_1\leq x_2\leq x_3)$. Similarly, $\V(x_3=x_1\leq x_2)$ is not an irreducible component for $Y$, since  
it is contained in the irreducible component $\V(x_1\leq x_3\leq x_2)$.

It turns out that the number of irreducible components does not depend on the semilattice  order $l$. One can directly compute the average number of irreducible components of algebraic sets defined by equations in three variables:
\begin{equation}
\overline{\Irr}(3)=\frac{6+2(2+2+2+2+2+2+3+3+3+4+4+4)}{25}=\frac{72}{25}=2.88
\label{eq:Irr(3,2)_handy}
\end{equation}

Recall that in Section~\ref{sec:average} we obtain the general expression for $\overline{\Irr}(n)$~(\ref{eq:Irr}). Clearly,~(\ref{eq:Irr}) will give~(\ref{eq:Irr(3,2)_handy}) for $n=3$.

\section{Decompositions of algebraic sets}
\label{sec:decomposition_properties}

Let $Y$ denote the solution set of an equation $t(X)=s(X)$ over the semilattice $L_l=\{a_1,a_2,\ldots,a_l\}$. The table above shows that any irreducible component sorts the variables $X$ into some order. The following definition formalizes this property of irreducible components.

Let $\s$ be a permutation of the set $\{1,2,\ldots,n\}$; $\s$ sorts the set $X$ as follows $\{x_{\s(1)},x_{\s(2)},\ldots,x_{\s(n)}\}$, i.e. $\s(i)$ is the $i$-th variable in the sorted set $X$. A permutation $\s$ is called a {\it a permutation of the first (second) kind} if $x_{\s(1)}\in\Var(t)\cap\Var(s)$ (respectively, $x_{\s(2)}\in\Var(t)\setminus\Var(s)$, $x_{\s(1)}\in\Var(s)\setminus\Var(t)$). Let $\chi(\s)\in\{1,2\}$ denote the kind of a permutation $\s$.

\begin{example}
Let us consider an algebraic set $Y_0=\V(x_1x_2=x_1x_3)$. By the table above, $Y_0$ is the union of the following irreducible components
\[
Y_1=\V(x_1\leq x_2\leq x_3),\;Y_2=\V(x_1\leq x_3\leq x_2),\;Y_3=\V(x_2=x_3\leq x_1)
\]
The irreducible components $Y_1,Y_2,Y_3$ define the following permutations 
\[
\s_1=\begin{pmatrix}
1&2&3\\
1&2&3
\end{pmatrix},
\s_2=\begin{pmatrix}
1&2&3\\
1&3&2
\end{pmatrix},
\s_3=\begin{pmatrix}
1&2&3\\
2&3&1
\end{pmatrix}.
\]
Moreover, $\s_1,\s_2$ are permutations of the first kind, whereas $\s_3$ is of the second kind. 
\label{ex:ex}
\end{example}

A permutation $\s$ defines an algebraic set $Y_\s$ as follows:
\begin{equation}
Y_\sigma=\V(\bigcup_{i=1}^{n-1}\{x_{\s(i)}
\leq x_{\s(i+1)}\})
\label{eq:Y_s_I}
\end{equation}
if $\chi(\s)=1$, and 
\begin{equation}
Y_\sigma=\V(\{x_{\s(1)}=x_{\s(2)}\}\bigcup_{i=2}^{n-1}\{x_{\s(i)}
\leq x_{\s(i+1)}\})
\label{eq:Y_s_II}
\end{equation}
if $\chi(\s)=2$.

\begin{example}
Let $\s_1,\s_2,\s_3$ be permutations from Example~\ref{ex:ex}. Obviously, the sets $Y_{\s_1},Y_{\s_2},Y_{\s_3}$ defined by~(\ref{eq:Y_s_I},\ref{eq:Y_s_II}) coincide with the sets $Y_1,Y_2,Y_3$ respectively.
\end{example}

\begin{lemma}
Let $\chi(\s)\in\{1,2\}$, then the set $Y_\sigma$ is irreducible and moreover 
\begin{equation}
\Gamma(Y_\s)\cong\begin{cases}
L_n,\mbox{ if $\chi(\s)=1$}\\
L_{n-1},\mbox{ if $\chi(\s)=2$}
\end{cases}
\label{eqq:Gamma}
\end{equation}
\label{l:Y_sigma_is_irreducible}
\end{lemma}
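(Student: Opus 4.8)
The plan is to compute the coordinate semilattice $\Gamma(Y_\s)$ explicitly and then invoke Proposition~\ref{pr:gamma_is_embedded_for_irr} to obtain irreducibility for free. The computational fact I would establish first is that in a linearly ordered semilattice every term evaluates at the minimum of its variables: a term $t(X)=\prod_{j\in J}x_j$ satisfies $t(P)=\min_{j\in J}p_j$ for any point $P=(p_1,\ldots,p_n)$, because $a_i\cdot a_j=a_{\min(i,j)}$. Hence the $\sim_{Y_\s}$-class of a term depends only on which of its variables receives the smallest value on $Y_\s$, and since $\Gamma(Y_\s)$ is generated by the classes $[x_j]$ with the induced product $[x_i][x_j]=[x_ix_j]$, it suffices to identify the distinct classes among the variables and to read off the multiplication.

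Consider first $\chi(\s)=1$, so $Y_\s=\{P: p_{\s(1)}\le p_{\s(2)}\le\cdots\le p_{\s(n)}\}$. On such a point the minimum of any nonempty set of coordinates is attained at the earliest variable in the $\s$-order, so every term $t$ is $\sim_{Y_\s}$-equivalent to the single variable $x_{\s(i_0)}$, where $i_0=\min\{i: x_{\s(i)}\in\Var(t)\}$. I would then show the $n$ classes $[x_{\s(1)}],\ldots,[x_{\s(n)}]$ are pairwise distinct: since $n\le l$, the point with $p_{\s(i)}=a_i$ lies in $Y_\s$ and separates them. As $[x_{\s(i)}][x_{\s(j)}]=[x_{\s(\min(i,j))}]$, the assignment $a_i\mapsto[x_{\s(i)}]$ is a semilattice isomorphism $L_n\to\Gamma(Y_\s)$.

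For $\chi(\s)=2$ the set $Y_\s$ additionally forces $p_{\s(1)}=p_{\s(2)}$, so $x_{\s(1)}\sim_{Y_\s}x_{\s(2)}$ and the two leading classes collapse. The same reduction then leaves the $n-1$ distinct classes $[x_{\s(1)}]=[x_{\s(2)}],[x_{\s(3)}],\ldots,[x_{\s(n)}]$, separated now by the point $p_{\s(1)}=p_{\s(2)}=a_1$, $p_{\s(i)}=a_{i-1}$ for $i\ge 3$ (which uses only $n-1\le l$ elements), and one obtains $\Gamma(Y_\s)\cong L_{n-1}$. Finally, since $L_n$ and $L_{n-1}$ are chains of length at most $n\le l$, both embed into $L_l$, so Proposition~\ref{pr:gamma_is_embedded_for_irr} gives that $Y_\s$ is irreducible in both cases.

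The point requiring care is the distinctness of the equivalence classes, which is exactly where the standing hypothesis $n\le l$ enters: without enough elements in $L_l$ one could not realize a strictly increasing chain of coordinates, the representatives $x_{\s(i)}$ would cease to be pairwise $\sim_{Y_\s}$-inequivalent, and $\Gamma(Y_\s)$ would shrink. Everything else is a routine check that the multiplication induced on classes reproduces the rule $a_i\cdot a_j=a_{\min(i,j)}$ of a finite chain.
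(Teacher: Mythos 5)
Your proposal is correct and follows the same route as the paper: compute $\Gamma(Y_\s)$, identify it as the chain $L_n$ (resp.\ $L_{n-1}$), and deduce irreducibility from Proposition~\ref{pr:gamma_is_embedded_for_irr}. The only difference is one of detail --- the paper simply asserts the presentation of $\Gamma(Y_\s)$ and reads off the isomorphism, whereas you verify it by collapsing every term to its $\s$-earliest variable and exhibiting a separating point, which makes explicit exactly where the standing hypothesis $n\leq l$ is needed.
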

\begin{proof}
By the definition of a coordinate semilattice, $\Gamma(Y_\sigma)$ is generated by the elements $\{x_1,x_2,\ldots,x_n\}$ and has the following defined relations
\[
x_{\s(1)}\leq x_{\s(2)}\leq\ldots x_{\s(n)}\; \mbox{ if }\chi(Y_\s)=1
\]
and 
\[
x_{\s(1)}=x_{\s(2)}\leq\ldots x_{\s(n)}\; \mbox{ if }\chi(Y_\s)=2.
\]

Thus, $\Gamma(Y_\sigma)$ is a linearly ordered semilattice, and~(\ref{eqq:Gamma}) holds. By Proposition~\ref{pr:gamma_is_embedded_for_irr}, the set $Y_\sigma$ is irreducible.  
\end{proof}

The following lemma gives the irreducible decomposition of an algebraic set $Y=\V(t(X)=s(X))$. 

\begin{lemma}
An algebraic set $Y=\V(t(X)=s(X))$ is a union
\begin{equation}
\label{eq:union_of_Y}
Y=\bigcup_{\chi(\s)\in\{1,2\}} Y_\sigma.
\end{equation}
\label{l:union_of_Y}
\end{lemma}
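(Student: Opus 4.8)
The plan is to prove the set equality~(\ref{eq:union_of_Y}) by two separate inclusions, using throughout the solution criterion recalled just before Lemma~\ref{l:Y_sigma_is_irreducible}: a point $P=(p_1,\ldots,p_n)$ lies in $Y=\V(t(X)=s(X))$ if and only if the least coordinate value $\mu=\min_{1\leq k\leq n}p_k$ is attained both at some variable of $\Var(t)$ and at some variable of $\Var(s)$.

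For the inclusion $\bigcup_{\chi(\s)\in\{1,2\}}Y_\s\subseteq Y$ I would fix an arbitrary $\s$ of the first or second kind and verify the criterion directly for each $P\in Y_\s$. If $\chi(\s)=1$, then~(\ref{eq:Y_s_I}) forces $x_{\s(1)}$ to carry the minimal value, and since $x_{\s(1)}\in\Var(t)\cap\Var(s)$ this one variable realizes $\mu$ in both $\Var(t)$ and $\Var(s)$. If $\chi(\s)=2$, then~(\ref{eq:Y_s_II}) forces $x_{\s(1)}$ and $x_{\s(2)}$ to share the minimal value; here $x_{\s(1)}\in\Var(s)\setminus\Var(t)$ realizes $\mu$ in $\Var(s)$ and $x_{\s(2)}\in\Var(t)\setminus\Var(s)$ realizes it in $\Var(t)$. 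In either case the criterion holds, so $P\in Y$.

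For the reverse inclusion $Y\subseteq\bigcup_{\chi(\s)\in\{1,2\}}Y_\s$ I would take $P\in Y$, let $\mu$ be its least coordinate value, and let $M$ be the set of variables attaining $\mu$. By the criterion $M\cap\Var(t)\neq\emptyset$ and $M\cap\Var(s)\neq\emptyset$, and I split into two cases. If $M\cap\Var(t)\cap\Var(s)\neq\emptyset$, I pick $x_{\s(1)}$ to be such a common minimal variable and let $x_{\s(2)},\ldots,x_{\s(n)}$ enumerate the remaining variables in nondecreasing order of their $P$-values; then $\s$ is of the first kind and $P\in Y_\s$ by~(\ref{eq:Y_s_I}). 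Otherwise $M$ meets $\Var(t)$ and $\Var(s)$ but avoids their intersection, whence $M\cap(\Var(s)\setminus\Var(t))\neq\emptyset$ and $M\cap(\Var(t)\setminus\Var(s))\neq\emptyset$; I then choose $x_{\s(1)}\in M\cap(\Var(s)\setminus\Var(t))$ and $x_{\s(2)}\in M\cap(\Var(t)\setminus\Var(s))$ and sort the rest as before, so that $\s$ is of the second kind and $P\in Y_\s$ by~(\ref{eq:Y_s_II}).

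The sorting and tie-breaking that place $P$ inside the chosen $Y_\s$ are routine; the step deserving the most care is the case split in the reverse inclusion, where one must argue that the \emph{failure} of $\mu$ to be attained at a variable of $\Var(t)\cap\Var(s)$ forces it to be attained simultaneously at a variable of $\Var(t)\setminus\Var(s)$ and at a variable of $\Var(s)\setminus\Var(t)$ — this is precisely what guarantees that a legitimate second-kind permutation exists. It is worth remarking that when $k_1=0$ or $k_2=0$ one of these two difference sets is empty and no second-kind permutation can arise; but in that situation the criterion forces $\mu$ into $\Var(t)\cap\Var(s)$, so only the first case can occur and the first-kind permutations already cover $Y$, consistently with~(\ref{eq:union_of_Y}).
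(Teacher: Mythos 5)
Your proof is correct and follows essentially the same route as the paper's: both inclusions are verified via the minimum-value solution criterion, with a point of $Y$ assigned to some $Y_\sigma$ by sorting its coordinates in ascending order. Your explicit case split on whether the minimum is attained in $\Var(t)\cap\Var(s)$ is in fact more careful than the paper's argument, which only arranges $x_{\sigma(1)}\in\Var(t)$ and leaves implicit the verification that the resulting $\sigma$ is of the first or second kind.
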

\begin{proof}
Suppose $P=(p_1,p_2,\ldots,p_n)\in Y$. Let us sort $p_i$ in the ascending order
\[
p_{\s(1)}\leq p_{\s(1)}\leq\ldots\leq p_{\s(n)},
\]
where $\s$ is a permutation of the set $\{1,2,\ldots,n\}$. We have that $\s$ induces the sorting of the variable set $X$. Obviously, we may assume that $x_\s(1)\in\Var(t)$ (if $x_{\s(1)}\notin\Var(t)$, the properties of $L_l$ provides an existsence of a variable $x_\s(i)\in \Var(t)$ such that $p_{\s(i)}=p_{\s(1)}$; in this case one can swap the values $\s(1)$ and $\s(i)$).

\bigskip

For example, the point $P=(a_2,a_1,a_1)\in\V(x_1x_2=x_1x_3)$ defines $\s(1)=2$, $\s(2)=3$, $\s(3)=1$ (the permutation obtained equals $\s_3$ from Example~\ref{ex:ex}, so the point $(a_2,a_1,a_1)$ belongs to the set $Y_3$). 

\bigskip

Since $\s$ is defined by the inequalities between the coordinates $p_i$, it follows $P\in Y_\s$. 

Let us prove now $Y_\s\subseteq Y$ for each $\s$. Suppose $P=(p_1,p_2,\ldots,p_n)\in Y_\s$.  If $\chi(Y_\s)=1$ then 
\[
x_{\s(1)}\in\Var(t)\cap\Var(s)\Rightarrow t(P)=s(P)=p_{\s(1)}\Rightarrow P\in\V(t(X)=s(X)).
\]
Otherwise ($\chi(Y_\s)=2$),
$t(P)=p_{\s(1)}$, $s(P)=p_{\s(2)}$, and~(\ref{eq:Y_s_II}) gives $p_{\s(1)}=p_{\s(2)}$. Therefore $P\in\V(t(X)=s(X))$.
\end{proof}

\begin{lemma}
For distinct permutations $\sigma,\sigma^\pr$ we have $Y_{\sigma}\nsubseteq Y_{\sigma^\pr}$ in~(\ref{eq:union_of_Y}).
\label{l:about_point_P_sigma}
\end{lemma}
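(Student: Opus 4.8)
The plan is to exhibit, for every permutation $\s$ appearing in the union~(\ref{eq:union_of_Y}), a witness point $P_\s\in Y_\s$ that escapes $Y_{\s^\pr}$ for all $\s^\pr\neq\s$; this immediately yields $Y_\s\nsubseteq Y_{\s^\pr}$. Since $n\leq l$, all of $a_1<a_2<\ldots<a_n$ exist in $L_l$, and I use them to record $\s$ in the coordinate pattern of $P_\s$. When $\chi(\s)=1$ I set $p_{\s(i)}=a_i$, so that $P_\s$ has pairwise distinct coordinates with $p_{\s(1)}<p_{\s(2)}<\ldots<p_{\s(n)}$. When $\chi(\s)=2$ I set $p_{\s(1)}=p_{\s(2)}=a_1$ and $p_{\s(i)}=a_{i-1}$ for $i\geq 3$, so that the minimal value $a_1$ of $P_\s$ is attained exactly at the indices $\s(1),\s(2)$ while the remaining coordinates strictly increase. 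In either case $P_\s$ satisfies the defining relations~(\ref{eq:Y_s_I}) or~(\ref{eq:Y_s_II}), so $P_\s\in Y_\s$.

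Next I assume $P_\s\in Y_{\s^\pr}$ for some $\s^\pr$ in~(\ref{eq:union_of_Y}) and force $\s^\pr=\s$. If $\chi(\s)=1$, the coordinates of $P_\s$ are pairwise distinct, so membership in $Y_{\s^\pr}$ yields a strictly increasing chain $p_{\s^\pr(1)}<\ldots<p_{\s^\pr(n)}$; the permutation sorting distinct values is unique, hence $\s^\pr=\s$, and the case $\chi(\s^\pr)=2$ is excluded outright because~(\ref{eq:Y_s_II}) would demand two equal coordinates. If $\chi(\s)=2$, the only repeated value of $P_\s$ is the minimum $a_1$, realised precisely at $\s(1),\s(2)$; any $\s^\pr$ with $P_\s\in Y_{\s^\pr}$ must therefore satisfy $\{\s^\pr(1),\s^\pr(2)\}=\{\s(1),\s(2)\}$ and $\s^\pr(i)=\s(i)$ for $i\geq 3$, leaving only $\s^\pr=\s$ or $\s^\pr=\t$, where $\t$ is obtained from $\s$ by exchanging $\s(1)$ and $\s(2)$.

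The one genuinely delicate point, and the main obstacle, is discarding this transposition $\t$ in the second-kind case. I resolve it by showing that $\t$ never contributes a set to~(\ref{eq:union_of_Y}) at all. Indeed, $\chi(\s)=2$ means $x_{\s(1)}\in\Var(s)\setminus\Var(t)$ and $x_{\s(2)}\in\Var(t)\setminus\Var(s)$, so the leading variable of $\t$ is $x_{\t(1)}=x_{\s(2)}\in\Var(t)\setminus\Var(s)$; this variable lies neither in $\Var(t)\cap\Var(s)$ nor in $\Var(s)\setminus\Var(t)$, so $\chi(\t)$ is neither $1$ nor $2$ and $\t$ is of neither kind. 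Hence $\s^\pr=\s$, completing the argument. This asymmetry built into the definition of the second kind is exactly the feature the proof relies on, and the construction of the witness points $P_\s$ is where the standing hypothesis $n\leq l$ is indispensable, since it guarantees the required distinct values $a_1,\ldots,a_n$ (respectively $a_1,\ldots,a_{n-1}$) exist in $L_l$.
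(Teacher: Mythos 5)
Your proof is correct and follows essentially the same route as the paper's: both exhibit a witness point $P_\sigma\in Y_\sigma$ whose coordinate pattern encodes $\sigma$ (yours uses $a_1,\ldots,a_{n-1}$ with the doubled minimum where the paper uses $a_2,\ldots,a_n$), and both dispose of the only dangerous competitor in the second-kind case --- the transposition of $\sigma(1)$ and $\sigma(2)$ --- via the same asymmetry, namely that its leading variable has the wrong membership type so the transposed permutation is of neither kind. The only cosmetic difference is that the paper reaches the contradiction through an explicitly violated inequality of $Y_{\sigma'}$ rather than by first narrowing $\sigma'$ down to $\sigma$ or the transposition.
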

\begin{proof}
Let $\s$ be a permutation of the first or second kind, and $P_\s$ denote the following point
\[
p_{\s(i)}=a_i\mbox{ if }\chi(\s)=1,
\] 
and
\[
p_{\s(i)}=\begin{cases}
a_i,\; 2\leq i\leq n\\
a_2,\; i=1
\end{cases}
\;\mbox{ if $\chi(\s)=2$.}\]

For example, the permutations $\s_1,\s_2,\s_3$ from Example~\ref{ex:ex} define the points 
\[
P_1=(a_1,a_2,a_3),\; P_2=(a_1,a_3,a_2),\; P_3=(a_3,a_2,a_2),
\]
respectively.

Since $P_\s$ preserves the order of variables, we have $P_\sigma\in Y_\s$. 

Let us show now $P_\s\notin Y_{\s^\pr}$ for every $\s^\pr\neq \s$ (for example, each of the points $P_1,P_2,P_3$ above belong to a unique irreducible component from Example~\ref{ex:ex}:
\[
P_1\in Y_1\setminus(Y_2\cup Y_3),\;P_2\in Y_2\setminus(Y_1\cup Y_3),\;P_3\in Y_3\setminus(Y_1\cup Y_2)).
\]
There exists indexes $i<j$ such that $i=\s(\al)$, $j=\s(\beta)$, $i=\s^\pr(\al^\pr)$, $j=\s^\pr(\beta^\pr)$, with $\al<\beta$, $\al^\pr>\beta^\pr$. Hence the inequality $x_i\leq x_j$ holds in $Y_\s$, and $x_j\leq x_i$ holds in $Y_{\s^\pr}$. Let us consider the following two cases:

\begin{enumerate}
\item If $\chi(\s)=1$, then $p_i<p_j$ in $P_\s$, and we immediately obtain $P_\s\notin Y_{\s^\pr}$.  

\item Suppose $\chi(\s)=2$. One should assume that $p_i=p_j=a_2$ (if $p_i<p_j$ we immediately obtain $P_\s\notin Y_{\s^\pr}$). Then $\al=1$, $\beta=2$ and $i=\s(1)$, $j=\s(2)$ (one can similarly consider the case $i=\s(2)$, $j=\s(1)$). Hence $x_i\in\Var(t)\setminus\Var(s)$, $x_j\in\Var(s)\setminus\Var(t)$. By the definition of a permutation of the second kind, $\s^\pr(1)=k\neq j$, and the inequality $x_k\leq x_j$ holds in $Y_{\s^\pr}$.  Let $\gamma$ be the index such that $\s(\gamma)=k$. Since $\al=1$, $\beta=2$, we have $\gamma>2$. Then $p_k=a_{\gamma}$, and $p_j<p_k$ for $P_\s$. Thus, $P\notin Y_{\s^\pr}$.    
\end{enumerate}  

\end{proof}

According to Lemmas~\ref{l:Y_sigma_is_irreducible},~\ref{l:union_of_Y},~\ref{l:about_point_P_sigma}, we obtain the following statement.

\begin{theorem}
The union~(\ref{eq:union_of_Y}) is the irreducible decomposition of the set $Y=\V(t(X)=s(X))$. The number of irreducible components is equal to the number of permutations of the first and second kind.
\label{th:number_of_irr_compionents}
\end{theorem}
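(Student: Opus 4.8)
The plan is to obtain the theorem as a synthesis of the three preceding lemmas together with the uniqueness of irreducible decompositions. The decisive structural fact is \cite{shevl_LOS_irred}, Proposition~2.2, quoted above: every algebraic set over $L_l$ admits a decomposition into irreducible sets with no component contained in another, and this decomposition is unique up to reordering. Consequently, to identify \emph{the} irreducible decomposition of $Y=\V(t(X)=s(X))$ it suffices to produce one presentation of $Y$ as an irredundant union of irreducible algebraic sets; uniqueness then forces that presentation to be the one we seek.

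First I would verify the three defining properties of such a presentation for the union~(\ref{eq:union_of_Y}), whose index set is exactly the permutations $\s$ with $\chi(\s)\in\{1,2\}$, i.e. the permutations of the first and second kind. Lemma~\ref{l:union_of_Y} supplies the equality $Y=\bigcup_{\chi(\s)\in\{1,2\}}Y_\s$; Lemma~\ref{l:Y_sigma_is_irreducible} shows that each summand $Y_\s$ is irreducible (its coordinate semilattice being $L_n$ or $L_{n-1}$, hence embeddable into $L_l$ since $n\le l$); and Lemma~\ref{l:about_point_P_sigma} gives $Y_\s\nsubseteq Y_{\s^\pr}$ for all $\s\neq\s^\pr$, which is precisely the irredundancy condition $Y_i\nsubseteq Y_j$ from~(\ref{eq:union_Y_general}). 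Thus~(\ref{eq:union_of_Y}) satisfies all hypotheses of the uniqueness result, and therefore it is the irreducible decomposition of $Y$, establishing the first assertion.

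For the enumeration I would extract the numerical statement from the same non-containment lemma. The point is that Lemma~\ref{l:about_point_P_sigma} yields not only irredundancy but also pairwise distinctness of the summands: if $Y_\s=Y_{\s^\pr}$ for some $\s\neq\s^\pr$, then in particular $Y_\s\subseteq Y_{\s^\pr}$, contradicting the lemma. Hence the assignment $\s\mapsto Y_\s$ is injective on the set of permutations of the first and second kind, so the irreducible components are in bijection with those permutations, and the number of irreducible components equals the number of permutations of the first and second kind.

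Since every nontrivial step is already discharged by the lemmas, I do not expect a genuine obstacle; the argument is essentially bookkeeping. The two points deserving explicit care are, first, that the index set of the union in Lemma~\ref{l:union_of_Y} is exactly the set of permutations of the first and second kind (so that the count in the theorem matches the summands, with no permutation of neither kind contributing), and second, that the injectivity of $\s\mapsto Y_\s$ is what upgrades the mere irredundancy $Y_\s\nsubseteq Y_{\s^\pr}$ into an exact enumeration rather than just an upper bound on the number of components.
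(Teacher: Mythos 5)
Your proposal is correct and follows exactly the route the paper takes: the paper derives the theorem directly from Lemmas~\ref{l:Y_sigma_is_irreducible}, \ref{l:union_of_Y}, \ref{l:about_point_P_sigma} together with the uniqueness of irreducible decompositions, offering no further argument. Your explicit remark that Lemma~\ref{l:about_point_P_sigma} also gives injectivity of $\s\mapsto Y_\s$ (hence an exact count rather than an upper bound) is a small but worthwhile addition of detail that the paper leaves implicit.
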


\section{Average number of irreducible components}
\label{sec:average}

One can directly compute that any $(k_1,k_2)$-equation admits 
\[
(n-k_1-k_2)(n-1)!
\]
permutations of the first kind and 
\[
k_1k_2(n-2)!
\]
permutations of the second kind. 

By Theorem~\ref{th:number_of_irr_compionents}, for a $(k_1,k_2)$-equation $t(X)=s(X)$ the number of its irreducible components equals  
\begin{equation}
\Irr(k_1,k_2,n)=(n-k_1-k_2)(n-1)!+k_1k_2(n-2)!
\label{eq:Irr(k_1,k_2,n,l)}
\end{equation}
The average number of irreducible components of algebraic sets defined by equations from $Eq(n)$ is
\begin{multline*}
\overline{\Irr}(n)=\frac{\sum_{(k_1,k_2)\in K_n}\#Eq(k_1,k_2,n)\Irr(k_1,k_2,n)}{\#Eq(n)}=\\
\frac{\sum_{k_1=0}^{n-1}\sum_{k_2=0}^{n-k_1}\#Eq(k_1,k_2,n)\Irr(k_1,k_2,n)-\#Eq(0,n,n)\Irr(0,n,n)}{\#Eq(n)}.
\end{multline*} 
Since 
\[
\Irr(0,n,n)=(n-0-n)(n-1)!+0n(n-2)!=0,
\]
we obtain 
\[
\overline{\Irr}(n)=\frac{\sum_{k_1=0}^{n-1}\sum_{k_2=0}^{n-k_1}\#Eq(k_1,k_2,n)\Irr(k_1,k_2,n)}{\#Eq(n)}.
\]

Below we compute $\overline{\Irr}$ using the following denotations:
\begin{enumerate}
\item $A\stackrel{(1)}{=}B$: an expression $B$ is obtained from $A$ by the binomial identity
\[
a\binom{n}{a}=n\binom{n-1}{a-1}
\]
\item $A\stackrel{(2)}{=}B$: an expression $B$ is obtained from $A$ by the following identity of binomial coefficients
\begin{equation}
\sum_{t=0}^n\binom{n}{t}t2^t=2n3^{n-1}.
\label{eq:identity}
\end{equation}
Let us demonstrate the proof of~(\ref{eq:identity}):
\begin{multline*}
\sum_{t=0}^n\binom{n}{t}t2^t\stackrel{(1)}{=}
n\sum_{t=0}^n\binom{n-1}{t-1}2^t=
2n\sum_{t=0}^n\binom{n-1}{t-1}2^{t-1}=
2n\sum_{u=0}^{n-1}\binom{n-1}{u}2^{u}=2n3^{n-1}
\end{multline*} 
\end{enumerate}

Let us compute $\overline{\Irr}(n)$. We have that 
\begin{multline*}
\sum_{k_1=0}^{n-1}\sum_{k_2=0}^{n-k_1}\#Eq(k_1,k_2,n)\Irr(k_1,k_2,n)=\\
\sum_{k_1=0}^{n-1}\sum_{k_2=0}^{n-k_1}\binom{n}{k_1}\binom{n-k_1}{k_2}
\left(n-k_1-k_2)(n-1)!+k_1k_2(n-2)!\right)=\\
n!\sum_{k_1=0}^{n-1}\sum_{k_2=0}^{n-k_1}\binom{n}{k_1}\binom{n-k_1}{k_2}-
(n-1)!\sum_{k_1=0}^{n-1}\sum_{k_2=0}^{n-k_1}\binom{n}{k_1}\binom{n-k_1}{k_2}k_1-\\
(n-1)!\sum_{k_1=0}^{n-1}\sum_{k_2=0}^{n-k_1}\binom{n}{k_1}\binom{n-k_1}{k_2}k_2+
(n-2)!\sum_{k_1=0}^{n-1}\sum_{k_2=0}^{n-k_1}\binom{n}{k_1}\binom{n-k_1}{k_2}k_1k_2
=\\S_1-S_2-S_3+S_4,
\end{multline*} 
where
\[
S_1=n!\sum_{k_1=0}^{n-1}\sum_{k_2=0}^{n-k_1}\binom{n}{k_1}\binom{n-k_1}{k_2}=n!\sum_{k_1=0}^{n-1}\binom{n}{k_1}2^{n-k_1}=n!(3^n-1),
\]
\begin{multline*}
S_2=(n-1)!\sum_{k_1=0}^{n-1}\sum_{k_2=0}^{n-k_1}\binom{n}{k_1}\binom{n-k_1}{k_2}k_1=
(n-1)!\sum_{k_1=0}^{n-1}\binom{n}{k_1}k_12^{n-k_1}\stackrel{(1)}{=}\\
n!\sum_{k_1=0}^{n-1}\binom{n-1}{k_1-1}2^{n-k_1}=
n!\sum_{t=0}^{n-2}\binom{n-1}{t}2^{n-1-t}=\\
n!\left(\sum_{t=0}^{n-1}\binom{n-1}{t}2^{n-1-t}-1\right)=
n!(3^{n-1}-1),
\end{multline*}

\begin{multline*}
S_3=(n-1)!\sum_{k_1=0}^{n-1}\sum_{k_2=0}^{n-k_1}\binom{n}{k_1}\binom{n-k_1}{k_2}k_2\stackrel{(1)}{=}\\
(n-1)!\sum_{k_1=0}^{n-1}\binom{n}{k_1}(n-k_1)\sum_{k_2=0}^{n-k_1}\binom{n-k_1-1}{k_2-1}=
(n-1)!\sum_{k_1=0}^{n-1}\binom{n}{k_1}(n-k_1)2^{n-k_1-1}=\\
(n-1)!\sum_{t=0}^n\binom{n}{t}t2^{t-1}=
\frac{(n-1)!}{2}\sum_{t=0}^n\binom{n}{t}t2^t\stackrel{(2)}{=}n!3^{n-1},
\end{multline*} 

\begin{multline*}
S_4=(n-2)!\sum_{k_1=0}^{n-1}\sum_{k_2=0}^{n-k_1}\binom{n}{k_1}\binom{n-k_1}{k_2}k_1k_2\stackrel{(1)}{=}\\
(n-2)!\sum_{k_1=0}^{n-1}\binom{n}{k_1}k_1(n-k_1)\sum_{k_2=0}^{n-k_1}\binom{n-k_1-1}{k_2-1}=
(n-2)!\sum_{k_1=0}^{n-1}\binom{n}{k_1}k_1(n-k_1)2^{n-k_1-1}=\\
\frac{(n-2)!}{2}\sum_{k_1=0}^{n}\binom{n}{k_1}k_1(n-k_1)2^{n-k_1}=
\frac{(n-2)!}{2}\sum_{t=0}^{n}\binom{n}{t}t(n-t)2^t=\\
\frac{(n-2)!}{2}\left(n\sum_{t=0}^{n}\binom{n}{k_1}t2^t-\sum_{t=0}^{n}\binom{n}{t}t^22^t\right)\stackrel{(2)}{=}
\frac{(n-2)!}{2}\left(2n^23^{n-1}-S_5\right),
\end{multline*} 
and
\begin{multline*}
S_5=\sum_{t=0}^{n}\binom{n}{k_1}t^22^t\stackrel{(1)}{=}
n\sum_{t=0}^{n}\binom{n-1}{t-1}t2^t=
n\left(\sum_{t=0}^{n}\binom{n-1}{t-1}(t-1)2^t+\sum_{t=0}^{n}\binom{n-1}{t-1}2^t\right)=\\
n\left(2\sum_{t=0}^{n}\binom{n-1}{t-1}(t-1)2^{t-1}+\sum_{t=0}^{n}\binom{n-1}{t-1}2^t\right)\stackrel{(2)}{=}
n\left(4(n-1)3^{n-2}+2\cdot 3^{n-1}\right)
\end{multline*} 

Finally, we obtain that
\begin{multline*}
S_1-S_2-S_3+S_4=n!(3^n-1)-n!(3^{n-1}-1)-n!3^{n-1}+\\
\frac{(n-2)!}{2}\left(2n^23^{n-1}-n(4(n-1)3^{n-2}+2\cdot 3^{n-1})\right)=
n!3^{n-1}+(n-2)!3^{n-2}n\left(3n-2(n-1)-3\right)=\\
n!3^{n-1}+n!3^{n-2}=4n!3^{n-2}
\end{multline*}
and
\begin{equation}
\label{eq:Irr}
\overline{\Irr}(n)=\frac{4n!3^{n-2}}{3^n-2}\sim\frac{4}{9}n!
\end{equation}

Notice that the final answer does not depend on $l$ if $l\leq n$. In particular,~(\ref{eq:Irr}) gives
\begin{equation}
\label{eq:Irr_3_2_from_formula}
\overline{\Irr}(3)=\frac{72}{25}=2.88
\end{equation}
for $n=3$, and~(\ref{eq:Irr_3_2_from_formula}) obviously coincides with~(\ref{eq:Irr(3,2)_handy}).


\begin{thebibliography}{10}


\bibitem{uni_Th_II}
E. Yu. Daniyarova, A. G. Myasnikov, V. N. Remeslennikov, Algebraic geometry over algebraic structures. II. Foundations, J. Math. Sci., 185:3 (2012), 389--416.

\bibitem{ben-or}
M. Ben-Or, Lower bounds for algebraic computation trees, Proc. 15th Annual Symposium on Theory of Computing (1983), 80--86.

\bibitem{shevl_LOS_irred}
A.\,N.\,Shevlyakov, On irreducible algebraic sets over linearly ordered semilattices, Groups,
Complexity and Cryptology, 8:2 (2016), 187--196.

\end{thebibliography}
\end{document}